\newcommand{\R}{\mathbb{R}}
\numberwithin{equation}{section}
\newtheorem{theorem}{Theorem}[section]
\newtheorem{theorem*}{Theorem}
\newtheorem{corollary}[theorem]{Corollary}
\newtheorem{remark}[theorem]{Remark}
\newtheorem{definition}[theorem]{Definition}
\newtheorem{example}[theorem]{Example}
\newcommand\argmin{{\rm argmin}}
\newcommand\vol{{\rm vol}}
\newcommand\supp{\mathop{\rm supp}}
\begin{document}
\title{A canonical barycenter via Wasserstein regularization \footnote{Y.-H.K. is supported in part by 
Natural Sciences and Engineering
Research Council of Canada (NSERC) Discovery Grants 371642-09 and 2014-05448 as well as Alfred P. Sloan research fellowship 2012-2016.  B.P. is pleased to acknowledge the support of a University of Alberta start-up grant and National Sciences and Engineering Research Council of Canada Discovery Grant number 412779-2012. Part of this research was done while both authors were visiting  the Fields Institute, Toronto, ON., in the Fall 2014, in the thematic program `Calculus of Variations', and while B.P was visiting the Pacific Institute for the Mathematical Sciences (PIMS) in February 2017. We thank both institutes for their hospitality and support. 
}}

\author{Young-Heon Kim\footnote{Department of Mathematics, University of British Columbia, Vancouver BC Canada V6T 1Z2 \ \ yhkim@math.ubc.ca} and Brendan Pass\footnote{Department of Mathematical and Statistical Sciences, 632 CAB, University of Alberta, Edmonton, Alberta, Canada, T6G 2G1 \ \ pass@ualberta.ca}}
\maketitle
\begin{abstract}
 We introduce a weak notion of barycenter of a probability measure $\mu$ on a metric measure space $(X, d, {\bf m})$, with the metric $d$ and reference measure ${\bf m}$. Under the assumption that optimal transport plans are given by mappings, we prove that our barycenter $B(\mu)$  is well defined; it is a probability measure on $X$ supported on the set of the usual metric barycenter points of the given measure $\mu$. The definition uses the canonical embedding of the metric space $X$ into its Wasserstein space $P(X)$, pushing a given measure $\mu$ forward to a measure on $P(X)$. We then regularize the measure by the Wasserstein distance to the reference measure ${\bf m}$, and obtain a uniquely defined measure on $X$ supported on the barycentric points of $\mu$. We investigate various properties of $B(\mu)$.

\end{abstract}

%\tableofcontents

\section{Introduction}\label{sec: intro}
In this paper, we propose a canonical weak notion of barycenter of a probability measure on a compact metric measure space $(X, d, {\bf m})$, where we assume that ${\bf m}$ is a probability measure. 

For a separable compact metric space $X$, we let $P(X)$ denote the space of Borel probability measures equipped with the weak-* topology.   Given a measure $\mu \in P(X)$, a barycenter (or Frechet mean) on $\mu$ is  a minimizer of the average squared distance to points in the support of $\mu$; that is, an element of 
\begin{align}\label{eq:barycenterset}
b(\mu) =\argmin \Big( y \mapsto \int _X d^2(x,y) d\mu(x)\Big ).
\end{align}
These metric barycenters are natural generalization of centers of mass of distributions on  Euclidean space. As the function $y \mapsto \int _X d^2(x,y) d\mu(x)$ is in general highly non-convex, except under strong additional criteria (for example, that $X$  a simply connected, non-positively curved space), the minimizer may not be unique; that is, $b(\mu)$ may not be a singleton.    Choosing a unique canonical minimizer in any reasonable sense is clearly impossible; for example, if $\mu$ is a uniform measure on the equator of the round sphere, the north and south pole are both barycenters and neither is any more natural than the other. 
To develop a canonical notion of barycenter, we utilize the geometry of $P(X)$ as follows.

 Via the natural isometry $x \mapsto \delta_x$, one may isometrically embed the  set $X$ into $P(X)$ equipped with Wasserstein metric 
\begin{equation}\label{eqn: wass distance}
W_2(\mu,\nu) := \inf\int_{X\times X} d^2(x,y)d\gamma(x,y)
\end{equation}
where the infimum is over the set of all probability measures $\gamma$ on $X \times X$ with marginals $\mu$ and $\nu$.  The minimization \eqref{eqn: wass distance} is the well known Monge-Kantorovich problem, reviewed for example, in books of Villani \cite{V,V2}, and more recently Santambrogio \cite{Santambrogio15p}; our particular interest here lies in the fact that the Wasserstein distance extends the underlying geometry on $X$ to $P(X)$.  It is also relevant here (and well known) that $P(X)$ with this metric inherits compactness from $X$, and that the Wasserstein metric topology coincides with the weak-* topology.

 One can then consider instead the barycenter of the image measure $(x \mapsto \delta_x)_\#\mu$ on the metric space $(P(X), W_2)$, which amounts to finding a minimizer of 
\begin{eqnarray}\label{eq:wass-barycenter}
\nu \mapsto \int_X W_2^2(\nu,\delta_x)d\mu(x)& =&\int_X \int_X d^2(y,x)d\nu(y)d\mu(x)\\\nonumber
& =&\int_X \int_X d^2(y,x)d\mu(x)d\nu(y).
\end{eqnarray}

The general study of barycenters in the Wasserstein space was initiated by Agueh-Carlier \cite{ac} when the underlying space $X =\mathbb{R}^n$ is Euclidean, and continued by the present authors to the setting where $X$ is a smooth Riemannian manifold \cite{KimPass2017}.  These represent a natural, non-linear way to interpolate between several (or infinitely many) probability measures, and have received a great deal of attention in recent years, due in part to important applications in image processing and statistics; see, for example, the work of Rabin et al. \cite{RabinPeyreDelonBernot2012} and Bigot and Klein \cite{BigotKlein2013} among others.
Clearly, if $x \in BC(\mu)$ is a barycenter of $\mu$, then $\delta_x$ is a barycenter of $(x \mapsto \delta_x)_\#\mu$, and so this minimization can be considered a relaxation of the barycenter problem on $X$.  Although the later problem is on the infinite dimensional space $P(X)$ rather than $X$, it has the advantage of being a linear minimization.  On the other hand, it certainly does not resolve the non-uniqueness issue; the minimizers of \eqref{eq:wass-barycenter} are exactly those measures which are supported on $b(\mu)$, while only the Dirac masses supported on the same set correspond to classical barycenters.

On the other hand, one can try to define a canonical, distinguished minimizer of \eqref{eq:wass-barycenter}.  For that purpose, throughout the paper  we assume that  $(X,d,{\bf {m}})$ satisfies a {\em regularity} condition (see Definition~\ref{def:regularity}); that is, 
 the minimizers of \eqref{eqn: wass distance} in the definition of $W_2({\bf m},\nu)$ are realized as  mappings pushing ${\bf m}$ forward to $\nu$. Examples of regular spaces $X$ include any smooth Riemannian manifold (with ${\bf m}$ absolutely continuous with respect to local coordinates) due to the Brenier-McCann theorem \cite{bren, m3}, and more general (singular) metric measure spaces including the 
$CD(K,N)$ space in the sense of Lott-Villani-Sturm \cite{Lott-Villani, sturm06, sturm06a}; see the work of Cavaletti and Huesmann \cite{Cavalletti20151367}, Gigli \cite{Gigli2012}, and 
 Gigli, Rajala, and Sturm \cite{Gigli2016}.  
 In this case, we show that one can pick the unique minimizer which is the most spread out, or the closest to the reference measure ${\bf m}$.  This is the approach adopted in this paper.

For $\epsilon >0$, consider the function on $P(X)$ defined by
\begin{align}\label{eq:F-epsilon}
F_\epsilon(\nu) :=\int_X \int_X d^2(y,x)d\mu(x)d\nu(y) +\epsilon W_2^2({\bf m},\nu).
\end{align}
It is not hard to establish  using regularity of $X$, that $F_\epsilon$ has a unique minimizer $\mu_\epsilon$. 
In our main theorem (Thereom~\ref{thm:B-mu-char}) we show 
 that as $\epsilon \rightarrow 0$, this minimizer converges weakly-* to a {\em unique} measure $\nu$ which minimizes $ W_2^2({\bf m} ,\cdot)$ among all minimizers of \eqref{eq:wass-barycenter}.   We call this distinguished minimizer the \emph{Wasserstein regularized barycenter of $\mu$}, and denote it by $B(\mu)$. This Wasserstein regularized barycenter is a weak notion of the metric barycenter;  moreover,  we verify that $B(\mu)$ is the unique minimizer of $\nu \mapsto W_2 ({\bf m}, \nu)$ among all $\nu \in P(X)$ with $\supp \nu \subset b(\mu)$.

The mapping $\mu \mapsto B(\mu)$ induces a subtle dynamical structure on $P(X)$, and we go on to establish some notable properties of it: the mapping reduces the variance (Corollary~\ref{cor: variance reduction}), has fixed points (including Dirac masses, but also others, see Remark \ref{rem: instability} and Example \ref{ex: circle}), has periodic orbits of period 2 (Examples \ref{ex:equator} and \ref{ex: circle}) but not greater (Corollary \ref{cor:per2}) and is monotone in convex order (Corollary \ref{cor:convex-order}).

In the next section, we prove our main theorem regarding $B(\mu)$, while Section~\ref{S:further-properties} is reserved for the development of various properties of the mapping $B :P(X) \rightarrow P(X)$.

\section{Wasserstein regularized Riemannian barycenter $B(\mu)$: existence, uniqueness and characterization.}
In this section, we prove that the Wasserstein regularized barycenter $B(\mu)$ of $\mu \in P(X)$ is well defined; that is, the weak-* limit $B(\mu)$ of the $\mu_\epsilon$ as $\epsilon \rightarrow 0$ exists.  Moreover, we characterize $B(\mu)$ as the probability measure supported on the set  $b(\mu)$ of barycentric points of $\mu$ which is closest to the reference measure ${\bf m}$ in the Wasserstein distance.

\begin{definition}[Regularity of $X$]\label{def:regularity}
	We say that the metric measure space $(X,d,{\bf m })$ is \emph{regular} if for every $\nu \in P(M)$, any minimizer $\gamma$ in the definition \eqref{eqn: wass distance} of $W_2({\bf m}, \nu)$ is concentrated on the graph of a function; that is, $\gamma =(Id,T)_\# \bf m$ for a map $T:X \rightarrow X$.
\end{definition}	
As mentioned in the introduction, many metric measure spaces are regular, including any compact smooth Riemannian manifold where the references measure $\bf m$ is absolutely continuous with respect to volume, and other more singular spaces such as  $CD(K,N)$ spaces. The case where ${\bf m}$ is (normalized) Riemannian volume is our motivating example.

It is well known that regularity ensures that the function $\nu \mapsto \epsilon W_2^2(\nu, {\bf m})$ is strictly convex with respect to linear interpolation between measures in $P(X)$ (see, e.g. \cite[Proposition 7.19]{Santambrogio15p}).  Since the functional $\nu \mapsto \int_X \int_X d^2(y,x)d\mu(x)d\nu(y)$ is linear, we see that $F_\epsilon$ in \eqref{eq:F-epsilon} is strictly convex in $\nu$ (with respect to linear interpolation); therefore its minimizer, $\mu_\epsilon$ is unique. Then, from the weak-* compactness of probability measures 
as $\epsilon \to 0$, a limit point of the $\mu_\epsilon$ exists. The following result establishes uniqueness of this limit point (that is,  the limit exists) and a characterization of it.
\begin{theorem}[Characterization of $B(\mu)$]\label{thm:B-mu-char}
As $\epsilon \rightarrow 0$, $\mu_\epsilon$ converges weak-* to a unique  limit $B(\mu)$ and 
 we have
\begin{align*}
\supp({\bf m}) \cap b(\mu) \subseteq  \supp B(\mu) \subseteq b(\mu), 
\end{align*}
where $b(\mu)$ is the set of barycentric points defined in \eqref{eq:barycenterset}. Moreover, $B(\mu)$ is the unique minimizer of the functional $\nu \mapsto W_2(\nu, {\bf m})$ among $\nu \in P(X)$ with $\supp \nu \subset b(\mu)$, i.e. 
\begin{align}\label{eq:min-in-supp}
 \{B(\mu)\} = \argmin_{\supp \nu \subset b(\mu)} W_2 (\nu, {\bf m}).
\end{align}
\end{theorem}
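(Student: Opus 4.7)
My plan splits the proof into three ingredients: (a) a $\Gamma$-convergence style argument showing every weak-$*$ cluster point of $\mu_\epsilon$ is supported in $b(\mu)$ and minimizes $W_2^2(\cdot, {\bf m})$ among such measures; (b) strict convexity to get a unique such minimizer, which pins down the limit and proves \eqref{eq:min-in-supp}; (c) a local rearrangement of the optimal transport map to get the reverse support inclusion.

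For (a), set $h(y) := \int_X d^2(x,y)\,d\mu(x)$ and $L(\nu) := \int_X h\,d\nu$. Since $h$ is continuous on the compact space $X$, the set $b(\mu) = \{h = \min h\}$ is nonempty and closed, and $\nu$ minimizes $L$ if and only if $\supp\nu \subseteq b(\mu)$. Choose any reference $\nu^* \in P(X)$ with $\supp\nu^* \subseteq b(\mu)$. Comparing $F_\epsilon(\mu_\epsilon) \le F_\epsilon(\nu^*)$ and using $L(\mu_\epsilon) \ge L(\nu^*) = \min h$ yields the uniform bound $W_2^2({\bf m},\mu_\epsilon) \le W_2^2({\bf m},\nu^*)$ and $L(\mu_\epsilon) - L(\nu^*) \le \epsilon W_2^2({\bf m},\nu^*) \to 0$. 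Hence any weak-$*$ cluster point $\mu_0$ satisfies $L(\mu_0) = \min h$ by continuity of $L$, so $\supp\mu_0 \subseteq b(\mu)$, and $W_2^2({\bf m},\mu_0) \le W_2^2({\bf m},\nu^*)$ by weak-$*$ lower semicontinuity of $W_2^2({\bf m},\cdot)$.

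For (b), the set $\{\nu : \supp\nu \subseteq b(\mu)\}$ is convex, and $\nu \mapsto W_2^2({\bf m},\nu)$ is strictly convex under the regularity hypothesis (as recalled just before the theorem). Hence a minimizer of $W_2^2({\bf m}, \cdot)$ over this convex set is unique, giving \eqref{eq:min-in-supp}. Consequently every subsequential limit $\mu_0$ coincides with this unique minimizer $B(\mu)$, and the full family $\mu_\epsilon$ converges weak-$*$ to $B(\mu)$.

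For (c), suppose for contradiction that some $x_0 \in \supp({\bf m}) \cap b(\mu)$ lies outside $\supp B(\mu)$. Pick $r>0$ small enough that $B_{2r}(x_0) \cap \supp B(\mu) = \emptyset$ and ${\bf m}(B_{r/2}(x_0)) > 0$. Let $T$ be the optimal map from ${\bf m}$ to $B(\mu)$ (which exists by regularity), and define $T'(y) = x_0$ for $y \in B_r(x_0)$ and $T'(y) = T(y)$ otherwise. Then $T'_\#{\bf m}$ is supported on $b(\mu)$ since $x_0 \in b(\mu)$ and $T(y) \in \supp B(\mu) \subseteq b(\mu)$ for ${\bf m}$-a.e.\ $y$. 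For $y \in B_r(x_0)$, one has $d(y,T(y)) \ge d(x_0,T(y)) - d(y,x_0) \ge 2r - r = r$, while $d(y,x_0) \le r$, with the inequality strict on $B_{r/2}(x_0)$. Since ${\bf m}(B_{r/2}(x_0)) > 0$, integrating yields $W_2^2({\bf m},T'_\#{\bf m}) \le \int d^2(y,T'(y))\,d{\bf m}(y) < \int d^2(y,T(y))\,d{\bf m}(y) = W_2^2({\bf m},B(\mu))$, contradicting \eqref{eq:min-in-supp}. The main obstacle is this rearrangement step: one must carefully arrange radii so that the rerouted mass provably stays inside $b(\mu)$ and the cost comparison is strict, leveraging positivity of ${\bf m}$-mass near $x_0$.
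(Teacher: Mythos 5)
Your proof is correct, and while its overall skeleton matches the paper's (support of limit points contained in $b(\mu)$, uniqueness of the $W_2$-closest measure via strict convexity from regularity, and a local surgery argument for the reverse inclusion $\supp({\bf m})\cap b(\mu)\subseteq \supp B(\mu)$), your handling of the limit $\epsilon\to 0$ is genuinely different and more streamlined. The paper treats the cluster points via two separate contradiction arguments: its Step 2 localizes around a putative point $z\in\supp\mu_0\setminus b(\mu)$ and compares $F_\epsilon(\mu_\epsilon)$ with $F_\epsilon(\delta_{\bar x})$ for a fixed barycentric point, and its Step 3 introduces a quantitative gap $\delta$ together with the diameter bound $W_2^2({\bf m},\mu_0)-W_2^2({\bf m},\mu_\epsilon)\le 2D\,W_2(\mu_0,\mu_\epsilon)$ to contradict minimality of $\mu_\epsilon$. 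Your part (a) replaces both with the single direct comparison $F_\epsilon(\mu_\epsilon)\le F_\epsilon(\nu^*)$, which yields at once $L(\mu_\epsilon)\to\min L$ and the uniform bound $W_2^2({\bf m},\mu_\epsilon)\le W_2^2({\bf m},\nu^*)$ for every admissible $\nu^*$; continuity of $L$ and lower semicontinuity of $W_2^2({\bf m},\cdot)$ then show that any cluster point is itself a minimizer in \eqref{eq:min-in-supp}, so the paper's separate existence argument (its Step 1) is not needed, only the strict-convexity uniqueness part, which you also invoke. Your part (c) is essentially the paper's Step 4 recast with the optimal map $T$ (available by regularity) in place of surgery on an optimal plan; your cost estimate is in fact cleaner, since the triangle inequality gives a gap of order $r^2\,{\bf m}(B_r)$ (the paper states $4r^2\,{\bf m}(B_r)$, which is more than the estimate actually delivers, though only strictness matters), and your use of $B_{r/2}(x_0)$ with ${\bf m}(B_{r/2}(x_0))>0$ secures that strictness. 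The only point left implicit is the standard upgrade from uniqueness of cluster points to convergence of the full family via weak-* compactness and metrizability of $P(X)$, which is routine.
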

\begin{proof}
For notational simplicity let us denote 
\begin{align*}
 d_0 = \min \Big( y \mapsto \int _X d^2(x,y) d\mu(x)\Big ). \quad \end{align*}

\paragraph*{Step 1:}
A standard continuity-compactness argument yields the existence of a minimizer $\bar \mu$ in \eqref{eq:min-in-supp}. Now note that the set of probability measures supported on $b(\mu)$ is convex (with respect to linear interpolation) and so as the functional in \eqref{eq:min-in-supp} is strictly convex by \cite[Proposition 7.19]{Santambrogio15p}, the minimizer $\bar \mu$ is unique.
\paragraph*{Step 2:}
Now, let $\mu_0$ be any limit point of  the minimizers $\mu_{\epsilon}$ of $F_{\epsilon}$ as $\epsilon \rightarrow 0$.  We will show that  $\supp \mu_0 \subset b(\mu)$.
Suppose $z \in \supp  \mu_0$, but $z \not\in b(\mu)$. As the set $b(\mu)$ is closed,  there exists $r >0$ such that for the metric ball $B_r(z)$ of radius $r$, $B_{2r} (z) \cap b(\mu) =\emptyset$, and, moreover, $ \mu_0 (B_r(z))  >0$. Since $B_{2r}(z)$ is disjoint from $b(\mu)$, we have $\eta >0$ such that 
\begin{align*}
\int_X d^2 (x,y) d\mu(x) > d_0 + \eta \quad \hbox{for all $y \in B_r(z)$}. 
\end{align*}
and so
\begin{align*}
\int_X \int_X d^2 (x,y) d\mu(x)d\mu_0(y) &\geq  d_0(1-\mu_0 (B_r(z))) + (d_0+\eta)\mu_0 (B_r(z))\\
&=d_0+\eta\mu_0 (B_r(z)).
\end{align*}
Fix a point $\bar x \in b(\mu)$ and define $\tilde \mu =\delta_{\bar x}$. 
Then,
\begin{align*}
& F_\epsilon (\mu_\epsilon)
\le  F_\epsilon (\tilde \mu) =d_0  +\epsilon W_2^2({\bf m} ,\tilde \mu)\\
&\leq \int_X \int_X d^2(x, y)d\mu(x) d\mu_0(y) - \eta  \mu_0 ( B_r(z)) + \epsilon  W_2^2 ({\bf m},  \mu_0)\\
& \quad \quad  +  \epsilon  \left( W_2^2 ({\bf m}, \tilde \mu) -W_2^2 ({\bf m}, \mu_0) \right)
\\
& \le F_\epsilon (\mu_0) - \frac{\eta}{2}  \mu_0 (B_r(z))\quad \hbox{ for sufficiently small $\epsilon$. }
\end{align*}
As $F_{\epsilon}(\nu)$ is continuous in both $\nu$ (with respect to the weak-* topology) and $\epsilon$, and $\mu_0$ is a limit point of the $\mu_\epsilon$ converges, this is a contradiction for small $\epsilon$, establishing $\supp  \mu_0 \subset b(\mu)$.

\paragraph*{Step 3:}
We now show that any limit point $\mu_0$ of the $\mu_\epsilon$ must coincide with $\bar \mu$; this will show that the limit $B(\mu)$ is well defined and equal to $\bar \mu$.

To see this, suppose by contradiction that  there is a limit point  $\mu_0 \neq \bar \mu$ of the $\mu_\epsilon$.  
Then by steps 1 and 2, there is a $\delta >0$ with
\begin{align*}
W_2^2 (\bar \mu, {\bf m}) \le W_2^2 (\mu_0, {\bf m}) - \delta. 
\end{align*}
Choose $\epsilon >0$ sufficiently small so that $W_2(\mu_0, \mu_\epsilon) \le \frac{\delta}{4D}$, where $D$ is the diameter of $X$. We have that 
\begin{eqnarray*}
W^2_2( {\bf m}, \mu_0) -W_2^2({\bf m}, \mu_\epsilon)& =&[W_2( {\bf m}, \mu_0) +W_2({\bf m}, \mu_\epsilon)][W_2( {\bf m}, \mu_0) -W_2({\bf m}, \mu_\epsilon)]\\
&\leq & 2DW_2(\mu_0, \mu_\epsilon) \leq \frac{\delta}{2},
\end{eqnarray*}
and so
\begin{eqnarray*}
F_\epsilon (\mu_\epsilon) & = &\int_X \int_X d^2 (x, y) d\mu(x) d\mu_\epsilon (y) + \epsilon W_2^2 (\mu_\epsilon, {\bf m})\\
& \geq & \int_X \int_X d^2 (x, y) d\mu(x) d\bar \mu (y) + \epsilon [W_2^2 ( \mu_0,{\bf m}) - \frac{\delta}{2}] \\ 
&\geq & \int_X \int_X d^2 (x, y) d\mu(x) d\bar \mu (y) +\epsilon[W_2^2 ( \bar \mu, {\bf m}) +\delta -\frac{\delta}{2}]\\
& >& F_\epsilon (\bar \mu) + \delta/2,
\end{eqnarray*}
which contradicts that $\mu_\epsilon$ is a minimizer of $F_\epsilon$, and therefore establishes that $\mu_\epsilon$ converges to $B(\mu)=\bar \mu$ as $\epsilon \rightarrow 0$.

\paragraph*{Step 4.}
Finally, we verify that $b(\mu) \cap \supp({\bf m}) \subset \supp \bar \mu$. 
Suppose not; then there exists a barycenter point $\bar x \in b(\mu)$ and $r>0$ such that the metric ball $B_{r}(\bar x)$ satisfies ${\bf m}(B_{r}(\bar x))>0$ and $\bar \mu (B_{2r}(\bar x)) =0$.  Now, let $\gamma \in \Gamma ({\bf m}, \bar \mu)
$ be an optimal transport plan from ${\bf m}$ to $\bar \mu$. 
Define $\tilde \mu$ as 
\begin{align*}
\tilde \mu = \bar \mu - \pi^2_\#\gamma_{B_r(\bar x) \times X} + {\bf m}(B_r(\bar x))\delta_{\bar x}
\end{align*}
where  $\pi^2$ is the projection $X\times X  \ni (x, y) \mapsto y \in X$ and $\gamma_{B_r(\bar x) \times X}$ is the restriction of $\gamma$ to the set $B_r(\bar x) \times X$.  Then, as $\bar \mu$ is supported on $b(\mu)$, so is $\tilde \mu$.
Define $\tilde \gamma$, a transport plan from ${\bf m}$ to $\tilde \mu$, as
\begin{align*}
\tilde \gamma = \gamma - \gamma_{B_r(\bar x)\times X} + {\bf m}_{B_r(\bar x)} \otimes \delta_{\bar x}.
\end{align*}
Note this plan $\tilde \gamma$ modifies $\gamma$ by transporting the mass on $B_r(\bar x)$ to the Dirac at $\bar x$. 
From the assumption $\bar \mu (B_{2r}(\bar x) =0$, we see that 
\begin{align*}
\int_{B_r(\bar x) \times X}  d^2 (x, y) d\gamma(x, y) >  4 {\bf m} (B_{r}(\bar x)) \,r^2.
\end{align*}
Therefore, from the obvious inequality 
\begin{align*}
\int  d^2 (x, y) d{\bf m}_{B_r(x)} \otimes \delta_{\bar x} (x, y) \le r^2 {\bf m}(B_r(\bar x)), 
\end{align*}
we get 
\begin{align*}
W_2^2 ({\bf m}, \tilde \mu)  \le  W_2^2 ({\bf m}, \bar \mu) - 3 {\bf m} (B_{r}(\bar x)) \,r^2, 
\end{align*}
contradicting the characterization of $\bar \mu$ as the minimizer of $\nu \mapsto W_2^2({\bf m}, \nu)$ among probability measures supported on $b(\mu)$.
\end{proof}

\begin{remark}[Instability]\label{rem: instability}
 The mapping $\mu \mapsto B(\mu)$ is highly unstable. To see this, consider uniform measure $\mu$ on the round sphere $X$ with ${\bf m} = \vol$. Symmetry considerations easily imply that $B(\mu) = \mu$.   Now, set $\mu^\epsilon:=\epsilon p_n + (1-\epsilon)\mu$, where $p_n$ is the north pole.  Then for any $\epsilon >0$, it is easy to see that $B(\mu^\epsilon) =\delta_{p_n}$.  As $\mu^\epsilon$ is close to $\mu$, but $B(\mu^\epsilon) = p_n$ is far from $B(\mu) =\mu$ in any reasonable topology, we conclude that $B$ is not stable.
  \end{remark}

\section{Further properties of  $B(\mu)$.}\label{S:further-properties}
We now 
investigate some properties of $B(\mu)$. 
First, 
as an immediate corollary of the characterization of $B(\mu)$ in Theorem~\ref{thm:B-mu-char}, we have the following result: 
\begin{corollary}[Support determines $B(\mu)$]\label{cor:support}
If $\supp B(\mu) =\supp B(\nu)$, then $B( \mu )= B( \nu)$.
\end{corollary}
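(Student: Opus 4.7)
The plan is to exploit the variational characterization \eqref{eq:min-in-supp} from Theorem~\ref{thm:B-mu-char} together with the strict convexity of $\sigma \mapsto W_2^2(\sigma, {\bf m})$ that was already noted just before the theorem.

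Set $S := \supp B(\mu) = \supp B(\nu)$. From Theorem~\ref{thm:B-mu-char} we know that $S = \supp B(\mu) \subseteq b(\mu)$ and likewise $S \subseteq b(\nu)$. Consider the set
\begin{equation*}
\mathcal{A}_S := \{\sigma \in P(X) : \supp \sigma \subseteq S\},
\end{equation*}
which is convex under linear interpolation of measures (a convex combination of two measures supported in $S$ is still supported in $S$). Note that both $B(\mu)$ and $B(\nu)$ belong to $\mathcal{A}_S$, and that $\mathcal{A}_S \subseteq \{\sigma : \supp \sigma \subseteq b(\mu)\}$ as well as $\mathcal{A}_S \subseteq \{\sigma : \supp \sigma \subseteq b(\nu)\}$.

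By \eqref{eq:min-in-supp}, $B(\mu)$ is the minimizer of $\sigma \mapsto W_2(\sigma, {\bf m})$ over the larger set $\{\sigma : \supp \sigma \subseteq b(\mu)\}$; since $B(\mu) \in \mathcal{A}_S$ attains this minimum, it must in particular be a minimizer over the smaller set $\mathcal{A}_S$. The identical argument applied to $\nu$ shows that $B(\nu)$ is also a minimizer of $\sigma \mapsto W_2(\sigma, {\bf m})$ over $\mathcal{A}_S$.

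Finally, since $\mathcal{A}_S$ is convex and $\sigma \mapsto W_2^2(\sigma, {\bf m})$ is strictly convex in $\sigma$ (by regularity of $(X, d, {\bf m})$, via \cite[Proposition 7.19]{Santambrogio15p}, as was used in Step~1 of the proof of Theorem~\ref{thm:B-mu-char}), the minimizer on $\mathcal{A}_S$ is unique, forcing $B(\mu) = B(\nu)$. There is no real obstacle here; the only point worth emphasizing is that the minimizer on $\mathcal{A}_S$ is automatically the minimizer on any larger convex set that contains it and attains the same minimum, which is exactly why restricting the optimization from $b(\mu)$ down to $S$ preserves optimality.
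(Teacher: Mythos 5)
Your proof is correct and takes essentially the route the paper intends: Corollary~\ref{cor:support} is stated there without proof as an immediate consequence of the characterization \eqref{eq:min-in-supp} in Theorem~\ref{thm:B-mu-char}, and your argument---observing that $B(\mu)$ and $B(\nu)$ both minimize $W_2(\cdot,{\bf m})$ over the convex set of measures supported on the common support, where the strict convexity of $\sigma\mapsto W_2^2(\sigma,{\bf m})$ (regularity, as in Step~1 of the theorem's proof) forces uniqueness---is exactly that consequence spelled out. No gaps.
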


Nest, note that we can interpret  \eqref{eq:min-in-supp} as characterizing $B(\mu)$ as the projection of the reference measure ${\bf m}$ to the set of measures supported on $b(\mu)$, with respect to Wasserstein distance. In the Riemannian setting, the following result makes this characterization more explicit.

\begin{corollary}\label{cor:Riemann-proj}
	Assume $X$ is Riemannian and smooth.  For almost all $x$ (with respect to $\vol$), there is a unique $y \in \argmin_{y \in b(\mu)} d^2(x,y)$.  Denoting the unique minimizer $y=:T(x)$,  and assuming ${ \bf m}$ is absolutely continuous with respect to volume, this $T$ is the optimal transport mapping from ${\bf m}$ to $B(\mu)$; in particular, 
		$$
	T_{\#}{\bf m}=B(\mu).
	$$ 
\end{corollary}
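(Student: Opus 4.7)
The plan is to first establish that the squared distance to the closed set $b(\mu)$ admits a unique nearest point $\vol$-almost everywhere (defining $T$), and then use the characterization in Theorem~\ref{thm:B-mu-char} of $B(\mu)$ as the Wasserstein projection of ${\bf m}$ onto the set of probability measures supported in $b(\mu)$ to identify $T_\#{\bf m}$ with $B(\mu)$.

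For existence and a.e.\ uniqueness of $T$: the function
\[
\varphi(x):=\min_{y\in b(\mu)} d^2(x,y)
\]
is locally semiconcave on the smooth Riemannian manifold $X$, being an infimum of the uniformly locally semiconcave family $\{d^2(\cdot,y)\}_{y\in b(\mu)}$. Locally semiconcave functions are differentiable $\vol$-a.e., and at a point $x$ of differentiability the nearest point in the closed set $b(\mu)$ is unique: a standard envelope argument shows that any minimizer $y$ must satisfy $\nabla d^2(\cdot,y)|_x=\nabla\varphi(x)$, hence $-2\exp_x^{-1}(y)=\nabla\varphi(x)$ (and the touching of $\varphi$ by the smooth function $d^2(\cdot,y)$ forces $x$ to lie outside the cut locus of $y$). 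Thus $T(x)$ is well defined $\vol$-a.e., and hence ${\bf m}$-a.e.\ by absolute continuity of ${\bf m}$.

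For the identification $T_\#{\bf m}=B(\mu)$: by construction $T$ takes values in $b(\mu)$, so $T_\#{\bf m}$ is supported on $b(\mu)$. For any competitor $\nu\in P(X)$ with $\supp\nu\subset b(\mu)$ and any coupling $\pi\in\Gamma({\bf m},\nu)$, since $y\in b(\mu)$ for $\pi$-a.e.\ $(x,y)$, the pointwise minimality of $T(x)$ gives $d^2(x,y)\geq d^2(x,T(x))$, hence
\[
\int_{X\times X} d^2(x,y)\,d\pi(x,y)\;\geq\;\int_X d^2(x,T(x))\,d{\bf m}(x).
\]
Minimizing over $\pi$ yields $W_2^2({\bf m},\nu)\geq\int_X d^2(x,T(x))\,d{\bf m}(x)$, and this bound is attained by the plan $(\mathrm{Id},T)_\#{\bf m}\in\Gamma({\bf m},T_\#{\bf m})$. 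Therefore $T_\#{\bf m}$ minimizes $\nu\mapsto W_2({\bf m},\nu)$ over measures supported on $b(\mu)$; by the uniqueness assertion in Theorem~\ref{thm:B-mu-char} we conclude $T_\#{\bf m}=B(\mu)$. The same chain of inequalities shows $(\mathrm{Id},T)_\#{\bf m}$ is an optimal coupling, so $T$ is the optimal transport map.

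The main obstacle is the first step: the $\vol$-a.e.\ uniqueness of the nearest point in $b(\mu)$. It genuinely uses the fine Riemannian regularity (semiconcavity of $d^2$ and negligibility of cut loci), which is precisely what restricts this corollary to the smooth Riemannian setting rather than the more general regular metric measure spaces considered in Theorem~\ref{thm:B-mu-char}.
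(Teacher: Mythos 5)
Your proposal is correct and follows essentially the same route as the paper: an envelope/differentiability argument for $\varphi(x)=\min_{y\in b(\mu)}d^2(x,y)$ (the paper phrases it via Lipschitzness, Rademacher, and McCann's superdifferentiability argument rather than semiconcavity, but it is the same mechanism) to get $\vol$-a.e.\ uniqueness of the nearest point, followed by the pointwise comparison $d^2(x,T(x))\le d^2(x,y)$ for $y\in b(\mu)$ to show $T_\#{\bf m}$ minimizes $W_2(\cdot,{\bf m})$ among measures supported on $b(\mu)$, and then the uniqueness in Theorem~\ref{thm:B-mu-char}. The only (harmless) cosmetic difference is that you compare against arbitrary couplings $\pi\in\Gamma({\bf m},\nu)$, whereas the paper compares against the optimal maps $T_\nu$ furnished by regularity.
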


\begin{proof}
Set $f(x) = \min_{y \in b(\mu)}d^2(x,y)$.  By a now standard argument of McCann \cite{m3}, $f$ is Lipschitz and hence differentiable almost everywhere (with respect to $\vol$) by Rademacher's theorem.  Another argument in \cite{m3} implies that $x \mapsto d^2(x,y)$ is differentiable whenever $f$ is, and, as $f(x) -d^2(x,y) \leq 0$ for all $y$, with equality for $y \in \argmin_{y \in b(\mu)} d^2(x,y)$, we have, for all $x$ at which $f$ is differentiable and all $y \in \argmin_{y \in b(\mu)} d^2(x,y)$ 
$$
\nabla f(x) =\nabla_x(d^2(x,y)).
$$
Equivalently, $y =\exp_x(2\nabla f(x)):=T(x)$; that is, $y$ is uniquely determined by $x$.  This holds wherever $f$ is differentiable, and therefore ${\bf m}$-a.e.

Now, for any other $\nu$ supported on $b(\mu)$ letting $T_\nu$ be the optimal map from ${\bf m}$ to $\nu$, we have $d(x,T_(x)) \leq d^2(x,T_\nu(x))$ for almost all $x$ and so
$$
W_2({\bf m}, T_{\#}{\bf m}) \leq \int_{X}d^2(x,T(x))d{\bf m}(x)\leq \int_{X}d^2(x,T_\nu(x))d{\bf m}(x) =W_2^2({\bf m}, \nu)
$$
which establishes minimality of $T_{\#}{\bf m}$ and therefore that $T_{\#}{\bf m} =B(\mu)$, by Theorem \ref{thm:B-mu-char}, and that $T$ is the optimal map between ${\bf m}$ and $B(\mu)$.
\end{proof}

Next, we note that, although $B(\mu)$ may not be supported on a single point, it is at least no more spread out than $\mu$, in the sense that it has lower \textit{variance}, ${\rm var}(\mu):=\min_{y \in X}\int_Xd^2(x,y)d\mu(x)$.  
\begin{corollary}[Variance reduction] \label{cor: variance reduction}
For $\mu \in P(X)$, 
\begin{align*}
 {\rm var} (B( \mu)) \le {\rm var} (\mu).
\end{align*}
Moreover, the equality  holds if and  only if $\supp \mu \subset \supp B(B(\mu))$.  
\end{corollary}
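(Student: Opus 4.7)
The plan is to combine a Fubini exchange with the support characterization from Theorem~\ref{thm:B-mu-char}. Writing $d_0 := {\rm var}(\mu)$ and $G(y) := \int_X d^2(x,y)\, dB(\mu)(x)$, one has ${\rm var}(B(\mu)) = \min_{y \in X} G(y)$ by definition. Since $\supp B(\mu) \subseteq b(\mu)$ by Theorem~\ref{thm:B-mu-char}, every $x \in \supp B(\mu)$ satisfies $\int_X d^2(x,y)\, d\mu(y) = d_0$, so Fubini gives
$$\int_X G(y) \, d\mu(y) \;=\; \int_X \int_X d^2(x,y) \, d\mu(y) \, dB(\mu)(x) \;=\; d_0 \;=\; {\rm var}(\mu).$$
Because $G(y) \ge {\rm var}(B(\mu))$ pointwise, integrating this inequality against $\mu$ yields the desired bound ${\rm var}(B(\mu)) \le {\rm var}(\mu)$.

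For the equality case, equality in the integrated bound forces $G(y) = {\rm var}(B(\mu))$ for $\mu$-a.e.\ $y$; continuity of $G$ then upgrades this to $\supp \mu \subseteq b(B(\mu))$, since $b(B(\mu))$ is precisely the closed set where $G$ attains its minimum. The ``if'' direction of the stated criterion is then immediate: applying Theorem~\ref{thm:B-mu-char} to $B(\mu)$ gives $\supp B(B(\mu)) \subseteq b(B(\mu))$, so $\supp \mu \subseteq \supp B(B(\mu))$ forces $\supp \mu \subseteq b(B(\mu))$, which by the chain above produces equality.

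For the ``only if'' direction, I would exploit the complementary inclusion $\supp({\bf m}) \cap b(B(\mu)) \subseteq \supp B(B(\mu))$ from Theorem~\ref{thm:B-mu-char}, combined with a perturbation argument modelled on Step~4 of that theorem's proof. Assuming equality and hence $\supp \mu \subseteq b(B(\mu))$, the goal is to rule out the existence of $\bar x \in \supp \mu$ with $\bar x \in b(B(\mu)) \setminus \supp B(B(\mu))$; the natural attempt is to produce a measure $\tilde \mu$ supported on $b(B(\mu))$ with $W_2({\bf m}, \tilde \mu) < W_2({\bf m}, B(B(\mu)))$, contradicting the characterization \eqref{eq:min-in-supp} of $B(B(\mu))$.

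I expect this converse implication to be the main obstacle: the Step~4-type construction (transferring mass of $B(B(\mu))$ near $\bar x$ onto a Dirac at $\bar x$) strictly decreases $W_2^2(\cdot, {\bf m})$ only when ${\bf m}$ charges a neighborhood of $\bar x$, so the hypothesis $\bar x \in \supp \mu$ must be used more subtly — likely by exploiting the optimal plan from ${\bf m}$ to $B(B(\mu))$ together with the barycentric identity $\int d^2(\bar x, z)\, dB(\mu)(z) = {\rm var}(B(\mu))$ that comes from $\bar x \in b(B(\mu))$ — to produce the required strict improvement. The Fubini inequality and the easy direction are routine; it is this sharp support identification under equality that carries the real content.
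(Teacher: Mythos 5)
Your proof of the inequality and of the ``if'' direction is correct and is essentially the paper's own argument: the authors also use $\supp B(\mu)\subseteq b(\mu)$ to get ${\rm var}(\mu)=\int_X\int_X d^2(x,y)\,d\mu(x)\,dB(\mu)(y)$, and then bound this below by $\min_{y}\int_X d^2(x,y)\,dB(\mu)(x)$ (they phrase the pointwise minimum as a minimization of a linear functional over $P(X)$, which is the same thing as your pointwise bound $G(y)\geq {\rm var}(B(\mu))$ integrated against $\mu$). Your intermediate characterization of equality, namely ${\rm var}(B(\mu))={\rm var}(\mu)$ if and only if $\supp\mu\subseteq b(B(\mu))$, is also exactly the paper's intermediate step ($\mu$ minimizes the linear functional $\nu\mapsto\int\int d^2\,dB(\mu)\,d\nu$).

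The ``only if'' direction is where your write-up stops short, and your instinct about why the Step~4-type perturbation stalls is correct in a stronger sense than you suggest: no such argument can close the gap, because the implication ``equality $\Rightarrow \supp\mu\subseteq\supp B(B(\mu))$'' is not a consequence of the stated hypotheses when ${\bf m}$ lacks full support. For instance, on the round sphere take ${\bf m}$ to be normalized volume restricted to a band of latitudes strictly inside the open northern hemisphere and $\mu=\frac12\delta_{p_n}+\frac12\delta_{p_s}$. Then $b(\mu)$ is the equator, $B(\mu)$ is (by Corollary~\ref{cor:Riemann-proj} and rotational symmetry) uniform measure on the equator, and as in Example~\ref{ex:equator} one has $b(B(\mu))=\{p_n,p_s\}$ with ${\rm var}(B(\mu))=\pi^2/4={\rm var}(\mu)$, so equality holds; yet every point of $\supp{\bf m}$ is strictly closer to $p_n$ than to $p_s$, so the minimizer in \eqref{eq:min-in-supp} over measures on $\{p_n,p_s\}$ is $B(B(\mu))=\delta_{p_n}$, and $\supp\mu\not\subseteq\supp B(B(\mu))$. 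What the paper actually does at this point is simply assert the equivalence of ``$\mu$ minimizes the linear functional'' with ``$\supp\mu\subseteq\supp B(B(\mu))$''; this tacitly identifies $b(B(\mu))$ with $\supp B(B(\mu))$ (its proof even writes $\supp B(\mu)=b(\mu)$), which Theorem~\ref{thm:B-mu-char} justifies only when $b(B(\mu))\subseteq\supp{\bf m}$, e.g.\ in the motivating case ${\bf m}=\vol$. Under such a full-support hypothesis the ``only if'' direction is immediate from Theorem~\ref{thm:B-mu-char} (no perturbation argument is needed), and combined with what you have already written the proof is complete; without it, the criterion should be stated with $b(B(\mu))$ in place of $\supp B(B(\mu))$, which is precisely what your argument proves. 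So you have located a genuine subtlety that the paper glosses over, but as written your proposal does not (and cannot) establish the literal ``only if'' statement.
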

\begin{proof}

 Observe that  ${\rm var} (\mu) =\int_X\int_X d^2(x,y) d\mu(x)d B(\mu)(y)$ since from Theorem~\ref{thm:B-mu-char}, $\supp B(\mu) = b(\mu)$ where $b(\mu)$ is the set of barycenter points of $\mu$.    Now note that
\begin{eqnarray*}
{\rm var}(B(\mu)) &= &\min_{x\in X} \int_X d^2(x,y)d B( \mu)(y)\\
&=& \min_{\nu \in P(X)} \int_X \int_X d^2(x,y)d\nu(x) d B(\mu)(y)\\
&\leq&   \int_X \int_X d^2(x,y)d\mu(x) d B(\mu)(y)\\
&=& {\rm var}(\mu).
\end{eqnarray*}
The equality holds if and only if $\mu $ is a minimizer of $\nu \mapsto \int_X\int_X d^2(x,y) d B(\mu)(x)d\nu(y)$.  This  is equivalent to  $\supp \mu \subset \supp B(B(\mu))$. \end{proof}

The equality case above is illustrated in the following simple examples:
\begin{example}\label{ex:equator}
 Let $X$ be the $n$-dimensional Riemannian round sphere with ${\bf m} = \vol$, and let  $\mu =\frac{1}{2} \delta_{p_s} + \frac{1}{2} \delta_{p_n}$, where $p_s$ and $p_n$ are the south and north poles, respectively. Then, $B(\mu)$ is the uniform probability measure on the equator, and $B(B(\mu)) = \mu$. 
\end{example}

\begin{example}\label{ex: circle}
	Let $X=S_1$ be the circle, $\bf m$ be the normalized arc-length and $\mu =\sum_{i=1}^N\frac{1}{N}\delta_{x_i}$, where $\{x_1,...,x_N\}$ are evenly spaced points on $X$.  The set $b(\mu)$ of minimal points of the function $y \mapsto \sum_{i=1}^N\frac{1}{N} d^2(y,x_i)$ depends on the parity of $N$:
	\begin{enumerate}
		\item If $N$ is odd, the function is minimized at each $x_i$, and so by rotational symmetry the regularized barycenter is $B(\mu) =\sum_{i=1}^N\frac{1}{N}\delta_{x_i} =\mu$; that is, $\mu$ is a fixed point of $B$.
		\item If $N$ is even, the minimizing points are exactly those points $y_i$, $i=1,2,...N$ which are halfway in between two neighbouring $x_i's$, and so $B(\mu) =\sum_{i=1}^N\frac{1}{N}\delta_{y_i}$.  An identical argument then yields $B(B(\mu)) =\mu$.
	\end{enumerate}
	
\end{example}

As we see from the past two examples, it is possible that the operation $\mu \mapsto B(\mu)$ in $P(X)$ may have a periodic orbit. We next prove that no orbit can be periodic with period greater than two.

\begin{corollary}[Period is at most $2$.]\label{cor:per2}
Suppose that $B^N(\mu) =\mu$ for some positive integer $N$.  Then $B^2(\mu) =\mu$.
\end{corollary}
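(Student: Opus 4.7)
The plan is to combine the two previously established facts about $B$: the variance-reduction Corollary \ref{cor: variance reduction} (with its equality case) and the support-determines-$B$ Corollary \ref{cor:support}.

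First I would observe that $B^N(\mu)=\mu$ forces the monotone chain
\[
{\rm var}(\mu)\ \ge\ {\rm var}(B(\mu))\ \ge\ {\rm var}(B^2(\mu))\ \ge\ \cdots\ \ge\ {\rm var}(B^N(\mu))={\rm var}(\mu)
\]
to be flat. Thus every inequality is an equality, and the equality clause of Corollary \ref{cor: variance reduction} (applied to each measure $B^{k-1}(\mu)$ in place of $\mu$) gives
\[
\supp B^{k-1}(\mu)\ \subset\ \supp B^{k+1}(\mu)\qquad\text{for every }k\ge 1.
\]
Combined with the periodicity $B^{N+j}(\mu)=B^j(\mu)$, chasing these inclusions separately along even and odd indices (each chain eventually closes via $B^N(\mu)=\mu$) yields $\supp \mu = \supp B^2(\mu)$.

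Next I would invoke Corollary \ref{cor:support}: since $\mu$ and $B^2(\mu)$ have equal support, $B(\mu)=B(B^2(\mu))=B^3(\mu)$. Applying $B$ to both sides repeatedly then gives $B^{2k}(\mu)=B^2(\mu)$ and $B^{2k+1}(\mu)=B(\mu)$ for every $k\ge 1$. If $N$ is even, this directly gives $\mu=B^N(\mu)=B^2(\mu)$ and we are done. If $N$ is odd, then $\mu=B^N(\mu)=B(\mu)$, so $\mu$ is already a fixed point of $B$, and in particular $B^2(\mu)=\mu$.

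The only subtle point is the bookkeeping in the first paragraph: one has to be careful that the odd-indexed and even-indexed chains of support inclusions each loop back under the periodicity, so that they collapse to equalities rather than merely inclusions. I expect this is the main (very minor) obstacle; once the supports of $\mu$ and $B^2(\mu)$ coincide, Corollary \ref{cor:support} closes the argument almost mechanically.
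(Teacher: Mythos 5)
Your overall strategy matches the paper's: flatness of the variance chain, the equality clause of Corollary \ref{cor: variance reduction} giving support inclusions, periodicity closing the chain, and then Corollary \ref{cor:support}. The one step that does not hold up as written is the invocation ``since $\mu$ and $B^2(\mu)$ have equal support, $B(\mu)=B(B^2(\mu))$''. Corollary \ref{cor:support} does not say that the support of the \emph{input} determines $B$; its hypothesis is $\supp B(\mu)=\supp B(\nu)$, i.e.\ equality of supports of the \emph{outputs}. The principle you actually use --- that $\supp\mu=\supp\nu$ implies $B(\mu)=B(\nu)$ --- is false in general, because the barycenter set $b(\mu)$ depends on the weights of $\mu$ and not only on its support: on the circle, $\tfrac12\delta_a+\tfrac12\delta_b$ and $\tfrac13\delta_a+\tfrac23\delta_b$ have the same support but different sets $b(\cdot)$, hence different regularized barycenters.

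Fortunately the gap closes with material you already have. Either (a) chase the odd-indexed chain you mention: $\supp B(\mu)\subseteq\supp B^3(\mu)\subseteq\cdots\subseteq\supp B^{2N+1}(\mu)=\supp B(\mu)$, so $\supp B(\mu)=\supp B^3(\mu)$; since $B(\mu)$ and $B^3(\mu)=B(B^2(\mu))$ are $B$-images of $\mu$ and $B^2(\mu)$, Corollary \ref{cor:support} now applies legitimately and yields $B(\mu)=B^3(\mu)$, after which your even/odd bookkeeping finishes the argument; or (b), as in the paper, observe that $\mu=B^N(\mu)=B(B^{N-1}(\mu))$ is itself a $B$-image, so the even chain gives $\supp B(B^{N-1}(\mu))=\supp\mu=\supp B^2(\mu)=\supp B(B(\mu))$, and Corollary \ref{cor:support} applied to the inputs $B^{N-1}(\mu)$ and $B(\mu)$ gives $\mu=B^2(\mu)$ in one stroke, with no further iteration needed.
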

 \begin{proof}
The general inequality ${\rm var} (B(\mu)) \leq {\rm var}(\mu) $ combined with periodicity easily implies that  ${\rm var} (B^k(\mu)) = {\rm var} (\mu) $ for all positive integers $k$. 
In particular, ${\rm var} (\mu) ={\rm var} (B(\mu))$, and Corollary \ref{cor: variance reduction} implies that $\supp(\mu) \subseteq \supp (B^2(\mu)) \subseteq\supp (B^4(\mu)) \subseteq...\subseteq \supp(B^{2N}(\mu)) =\supp(\mu)$.  As the first and last terms in this string of inclusions coincide, we must have equality throughout; in particular, $\supp(\mu) = \supp (B^2(\mu))$.
Therefore,   as $\mu=B^N(\mu)=B(B^{N-1}(\mu))$, we have
$$
\supp(B(B(\mu)))=\supp(B^2(\mu)) =\supp (\mu) =\supp(B(B^{N-1}(\mu)))
$$
and so Corollary \ref{cor:support} implies that $B(\mu) = B^{N-1}(\mu)$.  Applying $B$ to both sides we arrive at
$$ 
 \mu=B^N(\mu) =B(B^{N-1}(\mu))=B(B(\mu)) =B^2(\mu).
$$ 
This completes the proof.
\end{proof}

Finally, we record a version of Jensen's inequality for the Wasserstein regularized barycenter, which might alternatively be interpreted as expressing a convex order between $B(\mu)$ and $\mu$ (see Remark \ref{rem: convex order} below).
  Recall that a function $\phi: X\to \R$ is said to be geodesically convex if for each geodesic segment $\sigma : [0, 1]\to X$, the function $\phi (\sigma(t))$ is convex. 
\begin{corollary}[Monotonicity in convex order]\label{cor:convex-order}
Assume that $X$ is a Riemannian manifold and $\mu$ is absolutely continuous with respect to $\vol$. For any geodesically convex function $\phi$ on $X$, we have 
$$
\int_{X}\phi(x)d(B(\mu))(x) \leq \int_{X}\phi(x)d\mu(x).
$$
\end{corollary}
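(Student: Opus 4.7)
The plan is to reduce the claim to a pointwise Jensen-type bound at each barycenter: I will show that for every $y \in b(\mu)$,
\begin{equation*}
\phi(y) \leq \int_X \phi(x)\, d\mu(x).
\end{equation*}
Once this is established, integrating both sides against $B(\mu)$ immediately yields the corollary, since by Theorem \ref{thm:B-mu-char} we have $\supp B(\mu) \subseteq b(\mu)$.

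To establish the pointwise inequality at a fixed $y \in b(\mu)$, I would combine the first-order optimality of $y$ with the first-order consequence of geodesic convexity of $\phi$. Absolute continuity of $\mu$ with respect to $\vol$ implies that for each fixed $y$, the cut locus $\mathrm{Cut}(y)\subset X$ is $\mu$-negligible, so $\exp_y^{-1}(x)$ is well defined for $\mu$-a.e.\ $x$. Differentiating $z \mapsto \int_X d^2(x,z)\, d\mu(x)$ along an arbitrary geodesic emanating from $y$---which is permitted by dominated convergence, since $d^2(x,\cdot)$ is smooth at $y$ for $x \notin \mathrm{Cut}(y)$ and is globally Lipschitz in $z$---and using that this functional is minimized at $y$, yields the centroid relation
\begin{equation*}
\int_X \exp_y^{-1}(x)\, d\mu(x) = 0 \in T_y X.
\end{equation*}
On the other hand, geodesic convexity of $\phi$ (taken smooth; the nonsmooth case is addressed below) gives, for every $x \notin \mathrm{Cut}(y)$,
\begin{equation*}
\phi(x) \geq \phi(y) + \langle \nabla \phi(y), \exp_y^{-1}(x) \rangle,
\end{equation*}
by reading off the first-order information of $t \mapsto \phi(\sigma(t))$ at $t=0$, where $\sigma$ is the unique minimizing geodesic from $y$ to $x$. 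Integrating this inequality against $\mu$ and invoking the centroid relation gives $\int_X \phi\, d\mu \geq \phi(y)$, which is the desired pointwise bound.

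The main technical issue is the interplay with the cut locus: both the differentiation of $d^2(x,\cdot)$ at $y$ and the first-order convexity inequality for $\phi$ need $x$ off $\mathrm{Cut}(y)$. This is exactly where the hypothesis of absolute continuity of $\mu$ enters, making $\mathrm{Cut}(y)$ $\mu$-null and legitimizing both computations. If $\phi$ is only continuous and geodesically convex, one may either approximate it by smooth geodesically convex functions on small charts and pass to the limit, or replace $\nabla\phi(y)$ with an element of the Riemannian subdifferential of $\phi$ at $y$; either route preserves the first-order inequality needed above.
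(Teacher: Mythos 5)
Your proof is correct, and its overall reduction is the same as the paper's: establish the pointwise Jensen inequality $\phi(y) \le \int_X \phi\, d\mu$ for every $y \in b(\mu)$, then integrate against $B(\mu)$ using $\supp B(\mu) \subseteq b(\mu)$ from Theorem \ref{thm:B-mu-char}. The difference is in how that pointwise inequality is obtained: the paper simply cites the Riemannian Jensen inequality of Emery and Mokobodzki \cite{EmeryMokobodzki1991}, whereas you prove it from scratch by deriving the first-order (exponential barycenter, or Karcher mean) condition $\int_X \exp_y^{-1}(x)\, d\mu(x) = 0$ from minimality of $y$, and pairing it with the supporting-gradient inequality given by geodesic convexity along the minimizing geodesic from $y$ to each $x \notin \mathrm{Cut}(y)$. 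Your route is self-contained and has the merit of showing exactly where the hypothesis $\mu \ll \vol$ enters (it makes $\mathrm{Cut}(y)$ $\mu$-null, which legitimizes both the differentiation under the integral---justified by the uniform Lipschitz bound on $d^2(x,\cdot)$ on the compact space $X$---and the first-order convexity inequality), a hypothesis the paper's citation-based proof does not visibly use. Two small caveats: when concluding that the derivative vanishes you should extend the geodesic to both signs of $t$ (or test with $\pm v$), since $y$ is only known to be a minimizer; and of your two suggested fixes for nonsmooth $\phi$, the subdifferential route is the safe one---approximation by smooth geodesically convex functions is delicate in positive curvature (Greene--Wu type smoothing preserves convexity only approximately), while the existence of a supporting covector $p \in T_yX$ with $\langle p, v\rangle \le d\phi(y;v)$ for all $v$ follows from local Lipschitzness and sublinearity of the directional derivative of a geodesically convex function, and suffices for your integration step.
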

\begin{proof}
First, we recall that the classical Jensen's inequality extends to Riemannian manifolds (see, for instance,  \cite[Proposition 2]{EmeryMokobodzki1991}), asserting that, for any $y \in b(\mu)$, and geodesically convex function $\phi$, 
\begin{align}\label{eq:Jensen}
\phi(y) \leq \int_{X}\phi(x)d\mu(x).
\end{align}
Now, as $\supp(B(\mu)) \subseteq b(\mu)$, integrating \eqref{eq:Jensen} yields the desired result.
\end{proof}
We conclude the paper with a brief remark, offering some perspective on the preceding corollary.
\begin{remark}[Martingales and convex order on Riemannian manifolds]\label{rem: convex order}
Recall that a coupling $\pi$ between two probability measures $\mu$ and $\nu$ on $\mathbb{R}^n$ is a (discrete) martingale for $(\mu,\nu)$ if for $\mu$ almost every $x$,
$$
x = \int_{\mathbb{R}^n}yd\pi_x(y)
$$ 
where $\pi_x$ represents the disintegration of $d\pi(x,y) =d\pi_x(y)d\mu(x)$ with respect to $\mu$.  Strassen's coupling theorem \cite{strassen1965existence} asserts that there exists a martingale coupling of $\mu$ and $\nu$ if and only if $\nu$ dominates $\mu$ in convex order; that is $\int_{\mathbb{R}^n}\phi(x)d\mu(x) \leq \int_{\mathbb{R}^n}\phi(x)d\nu(x)$ for all convex $\phi: \mathbb{R}^n \rightarrow \mathbb R$.

\noindent On a metric space $X$, it is natural to define a martingale coupling of $\mu$, $\nu \in P(X)$ to be a coupling $d\pi(x,y) =d\pi_x(y)d\mu(x)$ such for $\mu$-a.e. $x$, we have
$$
x \in b(\pi_x).
$$
By analog with the Euclidean case, we will say that $\mu$ dominates $\nu$ in (geodesically) convex order if we have $\int_{X}\phi(x)d\mu(x) \leq \int_{X}\phi(x)d\nu(x)$ for all geodesically convex $\phi$.  

\noindent On a smooth Riemannian manifold, it is not hard to see (using \eqref{eq:Jensen}) that, if there exists a martingale coupling of $\nu$ and $\mu$ then $\mu$ dominates $\nu$ in convex order; that is, one implication of Strassen's theorem extends to manifolds.  The converse fails in general; on the sphere, for example, the only geodesically convex functions are constants, so \emph{any} $\mu$ dominates any $\nu$ in convex order.  

\noindent The preceding proposition asserts that on any manifold,   $\mu$ dominates $B(\mu)$ in convex order.  In this case, it is worth noting that product measure is a martingale between them. So the collection of pairs $\{(B(\mu), \mu)\}_{\mu \in P(X)}$ is a collection of marginals for which the conclusion of Strassen's theorem extends to manifolds.  We also note that when $B^2(\mu)=\mu$,  product measure  is a martingale with respect to either order; that is, it is a martingale for $(B(\mu),\mu)$ \emph{and} for $(\mu,B(\mu)) = (B(B(\mu)),B(\mu))$.
\end{remark}

\bibliographystyle{plain}
%\bibliography{barycenter}
 \bibliography{biblio-barycenter}

\end{document}